\def\Name{Chen Song}  
\def\Login{University of Illinois at Chicago}
\def\Session{}
\newtheorem{lemma}{Lemma}[section]
\newtheorem{theorem}[lemma]{Theorem}
\newtheorem{proposition}[lemma]{Proposition}
\newtheorem{definition}[lemma]{Definition}
\newtheorem{construction}[lemma]{Construction}
\newcommand{\leqpar}{\underset{{\scriptscriptstyle (}-{\scriptscriptstyle )}}{<}}
\title{Stability of Kernel Bundles}
\author{\Name, \texttt{\Login}}
\date{}
\def\endproofmark{$\Box$}
\newenvironment{proof}{\par{\bf Proof}:}{\endproofmark\smallskip}
\begin{document}
\maketitle

\begin{abstract}
    In this paper, we study the stability of general kernel bundles on $\mathbb{P}^n$. Let $a,b,d>0$ be integers. A kernel bundle $E_{a,b}$ on $\mathbb{P}^n$ is defined as the kernel of a surjective map $\phi:\mathcal{O}_{\mathbb{P}^n}(-d)^a\rightarrow \mathcal{O}_{\mathbb{P}^n}^b$.
Here $\phi$ is represented by a $b\times a$ matrix $(f_{ij})$ where the entries $f_{ij}$ are polynomials of degree $d$. We give sufficient conditions for semistability of a general kernel bundle on $\mathbb{P}^n$, in terms of its Chern class. 
\end{abstract}

\section{Introduction}
In this paper, we study the stability of kernel bundles on projective space $\mathbb{P}^n$. Let $a,b,d>0$ be integers. A \textbf{kernel bundle} $E_{a,b}$ on $\mathbb{P}^n$ is defined by the following short exact sequence 
$$0\xrightarrow{}E_{a,b}\xrightarrow{}\mathcal{O}_{\mathbb{P}^n}(-d)^a\xrightarrow[]{\phi}\mathcal{O}_{\mathbb{P}^n}^b\xrightarrow{}0.$$
Here $\phi$ is a surjective map represented by a $b\times a$ matrix $(f_{ij})$, where the entries $f_{ij}$ are polynomials of degree $d$. We give sufficient conditions on the pair $(a,b)$ such that for large enough $d$, a general kernel bundle $E_{a,b}$ is semistable. \par
In the study of vector bundles, stability is a fundamental property which has wide applications.  Let $(X,\mathcal{O}_X(1))$ be an $n$-dimensional polarized variety. Let $\mathcal{F}$ be a torsion free coherent sheaf on $X$.  Let $r(\mathcal{F})$ be the rank of $\mathcal{F}$. The slope of $\mathcal{F}$ is $\mu(\mathcal{F}):=\frac{c_1(\mathcal{F})\cdot \mathcal{O}_X(1)^{n-1}}{r(\mathcal{F})}$. We say $\mathcal{F}$ is (semi)stable if for every coherent subsheaf $\mathcal{W}$ of $\mathcal{F}$ with $0<r(\mathcal{W})<r(\mathcal{F})$, we have $\mu(\mathcal{W})\leqpar \mu(\mathcal{F})$. By Harder-Narasimhan filtration, the semistable bundles are fundamental building blocks of vector bundles. Therefore, looking for semistable bundles is crucial to studying vector bundles on algebraic varieties. Semistable bundles also behave well in families and form projective moduli spaces, see \cite{Mar}. \par
Although kernel bundles has been intensely studied, stability of a general kernel bundles for high degree is still an open problem. In this paper, we study the stability of kernel bundles on $\mathbb{P}^n$ of high degree and prove the following result. 
\begin{theorem}[Main Theorem]
    Let $k=(n+1)^2-\sum_{i=2}^{n+1}( (n+1) \mod i)$. For a given pair of positive integers $(a,b)$, if we can write $a=mb-j$ for some integers $j$, $m$ with $0\leq j \leq b-1$ and $2 \leq m \leq k$, then a general kernel bundle $E_{a,b}$ on $\mathbb{P}^n$ is semistable for $d \gg 0$. 
\end{theorem}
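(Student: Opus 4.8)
The plan is to combine a genericity (dimension-count) argument with a block construction. Since semistability is an open condition in flat families and the bundles $E_{a,b}$ are the kernels over the irreducible space $M:=\mathrm{Hom}(\mathcal{O}_{\mathbb{P}^n}(-d)^a,\mathcal{O}_{\mathbb{P}^n}^b)$ of $b\times a$ matrices of forms of degree $d$ (restricted to the dense open where $\phi$ is surjective), it suffices, for each pair $(a,b)$ in the stated range --- equivalently $b<a\le kb$ --- and each $d\gg0$, to produce one $\phi$ with $\ker\phi$ semistable; equivalently, to show the locus of $\phi$ with $\ker\phi$ not semistable is a proper closed subset of $M$.

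To control that locus I would rule out destabilizing subsheaves. Suppose $W\subsetneq E_{a,b}$ with $\mu(W)>\mu(E_{a,b})=-\tfrac{ad}{a-b}$. Passing to the saturation of $W$ and to the open set where all sheaves in sight are locally free --- which affects no slopes --- we may take $W$ to be a subbundle of some rank $r$ with $1\le r\le a-b-1$ (if $a-b=1$ then $E_{a,b}$ is a line bundle and is trivially semistable). Because $W\hookrightarrow E_{a,b}\hookrightarrow\mathcal{O}(-d)^a$ and $\phi|_W=0$, the map $\phi$ factors through $Q:=\mathcal{O}(-d)^a/W$. Setting $\delta:=\deg W$, the inclusion $W\subset\mathcal{O}(-d)^a$ forces $\delta\le-rd$ while the destabilizing hypothesis gives $\delta>-\tfrac{rad}{a-b}$, so $\delta$ runs over only finitely many --- namely $O(rd)$ --- values. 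For each such numerical type, consider the incidence variety
\[
\mathcal{I}=\bigl\{(\phi,[Q]):\phi\text{ factors through }Q\bigr\}\subset M\times\mathrm{Quot}(\mathcal{O}(-d)^a),
\]
whose image in $M$, as the type varies, contains the bad locus. The fibre of $\mathcal{I}\to\mathrm{Quot}$ over $[Q]$ is the linear subspace $\mathrm{Hom}(Q,\mathcal{O}^b)$, whose codimension in $M$ is exactly $b\,\hom(W,\mathcal{O})-b\,\mathrm{ext}^1(Q,\mathcal{O})$ (from the long exact sequence of $\mathrm{Hom}(-,\mathcal{O})$ applied to $0\to W\to\mathcal{O}(-d)^a\to Q\to0$, using $\mathrm{Ext}^1(\mathcal{O}(-d)^a,\mathcal{O})=0$). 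Hence the bad locus is proper once, for every destabilizing type,
\[
\dim_{[Q]}\mathrm{Quot}(\mathcal{O}(-d)^a)\ \le\ \hom(W,Q)\ <\ b\,\hom(W,\mathcal{O})-b\,\mathrm{ext}^1(Q,\mathcal{O}).
\]

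Proving this inequality is the heart of the matter. One estimates $\hom(W,\mathcal{O})=h^0(W^\vee)$, $\hom(W,Q)$, and the intermediate cohomology $H^i(W)$ ($0<i<n$) in terms of $(r,\delta,d,n)$, using that $W$ is a rank-$r$ subsheaf of $\mathcal{O}(-d)^a$ so that $W^\vee$ is a quotient of $\mathcal{O}(d)^a$ and the generic splitting type of $W$ is pinned down by $r$ and $\delta$. The dangerous $W$ are the "most positive'' ones, whose splitting behaviour is forced into the $n+1$ smallest available twists; optimizing over all admissible $(r,\delta)$ then yields exactly the threshold $b<a\le kb$, i.e.\ $2\le m\le k$ with $k=(n+1)^2-\sum_{i=2}^{n+1}\bigl((n+1)\bmod i\bigr)=\sum_{i=1}^{n+1}\sigma(i)$. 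To lighten the bookkeeping I would exploit two structural reductions: a block-diagonal $\phi$ identifies $E_{cp,cq}$ with $E_{p,q}^{\oplus c}$, and a general block-upper-triangular $\phi=\left(\begin{smallmatrix}\phi_1&\ast\\0&\phi_2\end{smallmatrix}\right)$ realizes $\ker\phi$ as an extension $0\to\ker\phi_1\to\ker\phi\to\ker\phi_2\to0$; iterating the latter presents $E_{mb-j,b}$ as a general extension of $b-j$ copies of the syzygy bundle $E_{m,1}$ and $j$ copies of $E_{m-1,1}$, so one first disposes of the base case $b=1$ (syzygy bundles of general forms, semistable for $n+1\le m$ and $d\gg0$, either by the known theory of syzygy bundles or by the incidence estimate directly) and then controls how that extension interacts with possible destabilizing subsheaves.

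The step I expect to be the main obstacle is the sharp dimension estimate just described: one must bound $\dim\mathrm{Quot}$ and the cohomology of $W$ \emph{uniformly} over the a priori unbounded range of degrees $\delta$ of a saturated destabilizing subbundle, and then check that, as $d\to\infty$, the resulting inequalities degenerate to precisely the stated numerical bound --- in particular, tracking the $\mathrm{ext}^1(Q,\mathcal{O})$ and higher-cohomology corrections accurately enough to obtain $\sum_{i=1}^{n+1}\sigma(i)$, rather than a coarser bound, since the naive leading-order comparison in $d$ is already true without any constraint on $a/b$. A secondary obstacle is to verify that the block-triangular extensions above are sufficiently general that semistability of the constituent bundles $E_{m,1}$ and $E_{m-1,1}$ --- which have distinct slopes --- nonetheless forces semistability of the total space, i.e.\ that the relevant $\mathrm{Ext}^1$ groups are large enough to prevent any intermediate-slope subsheaf from lifting through the filtration.
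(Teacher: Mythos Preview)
Your proposal takes a genuinely different route from the paper, and the part you yourself flag as the main obstacle is in fact a real gap.

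The paper does \emph{not} use a Quot-scheme dimension count at all. Its argument is entirely elementary and constructive: it writes down an explicit list of $k$ monomials of degree $d$ (Construction~3.1) and applies Brenner's combinatorial criterion (Theorem~\ref{Bre}) to bound $\mu_{\max}(E_{k,1})$ by an explicit quantity $B\approx -\tfrac{n^2+5n+4}{n^2+5n+2}\,d$. The number $k=(n+1)^2-\sum_{i=2}^{n+1}((n+1)\bmod i)$ is simply the count of monomials in that construction; it does not emerge from any asymptotic cohomology estimate. Then, using the block-triangular extensions you also mention, the paper builds $E_{mb-1,b}$ as an iterated extension by copies of $E_{k,1}$ (or $E_{m,1}$ for $m<k$) and shows, by induction on $b$ and then on $j$, that \emph{every} proper subsheaf $W$ of $E_{mb-j,b}$ still satisfies $\mu(W)\le B$. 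Semistability follows because one checks directly that $\mu(E_{mb-j,b})\ge B$ exactly in the range $2\le m\le k$. The crucial point is that the paper never tries to prove the extension pieces are semistable and then lift that through the filtration; it propagates the \emph{uniform numerical bound} $\mu_{\max}\le B$ instead, which sidesteps entirely your ``secondary obstacle'' about pieces of different slopes.

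Your primary strategy, by contrast, hinges on the inequality
\[
\dim_{[Q]}\mathrm{Quot}(\mathcal{O}(-d)^a)\ <\ b\,\hom(W,\mathcal{O})-b\,\operatorname{ext}^1(Q,\mathcal{O})
\]
holding for every destabilizing type. You have not established this, and your own remark that ``the naive leading-order comparison in $d$ is already true without any constraint on $a/b$'' is exactly the problem: both sides grow like $d^n$, the threshold $m\le k$ must come from a subleading term, and nothing in your outline explains why optimizing over $(r,\delta)$ produces $\sum_{i=1}^{n+1}\sigma(i)$ rather than some other constant. Controlling $\operatorname{ext}^1(Q,\mathcal{O})$ and $\dim\mathrm{Quot}$ uniformly over all saturated $W\subset\mathcal{O}(-d)^a$ with only the constraints $-\tfrac{rad}{a-b}<\delta\le -rd$ is genuinely hard, since $W$ need not split and its intermediate cohomology is not determined by $(r,\delta)$. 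So as it stands the proposal is a plausible framework but not a proof, and the paper's monomial-plus-Brenner route is both simpler and what actually pins down the specific bound $k$.
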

On $\mathbb{P}^n$, kernel bundles of the form $E_{a,1}$ are called \emph{syzygy bundles}.  In \cite[Theorem 6.3]{Br08}, Brenner provides a method to compute the maximal slope of all proper subbundles $\mu_{\max}$ of a syzygy bundle. Based on this, Brenner gives a criterion of stability of syzygy bundles. In \cite[Theorem 3.5]{Co10}, Costa, Macias Marques and Mir\'{o}-Roig prove that on $\mathbb{P}^n$ there exists a stable syzygy bundle $E_{a,1}$ where $\phi$ defined by a family of $a$ polynomials if $n+1\leq a \leq \binom{d+2}{2}+n-2$ and $(n,a,d)\neq (2,5,2)$. In \cite[Theorem 4]{Coa11}, Coand\u{a} proves that for $n\geq 3$ there exists a stable syzygy bundle of form $E_{a,1}$ on $\mathbb{P}^n$ if $n+1\leq a \leq \binom{n+d}{d}$. \par
For kernel bundles of the form $E_{a,b}$ on $\mathbb{P}^n$, in \cite[Theorem 8.1]{Bo90}, Bohnhorst and Spindler gives a criterion of semistability when $a-b=n$.  When $d=1$, the dual of kernel bundle is called Steiner bundle, which has the same stability as kernel bundles. Steiner bundles are first introduced by Dolgachev and Kapranov \cite{Dol}. Stability of exceptional Steiner bundles is studied in \cite{Bra1}, \cite{Bra2} and \cite{Hui}. In a recent result \cite[Theorem 5.1]{Cos} of Coskun, Huizenga and Smith, they prove that on $\mathbb{P}^n$ the kernel bundle $E_{a,b}$ is stable if $d=1$ and it is semistable if $a-b\geq n$ and $\frac{n}{b} \leq \frac{a-b}{b} < \frac{n-1+\sqrt{n^2+2n-3}}{2}$ for arbitrary $d$.  \par

\textbf{Organization of the paper.}  In \ref{pre}, we recall the preliminary facts needed in the rest of the paper, including Brenner's theorem on the maximal slope of syzygy bundles. In \ref{exi}, we give the proof of our main theorem. We construct a syzygy bundle $E_{k,1}$ with a upper bound of $\mu_{\max}(E_{k,1})$. Then we construct a short exact sequence of kernel bundles $0\xrightarrow{}E_{k(b-1)-1,b-1}\xrightarrow{}E_{kb-1,b}\xrightarrow[]{\psi}E_{k,1}\xrightarrow{}0$. By induction on $b$, we use the upper bound of $\mu_{\max}(E_{k,1})$ to find an upper bound of $\mu_{\max}(E_{kb-1,b})$. We use similar short exact sequence and the upper bound of $\mu_{\max}(E_{kb-1,b})$ to find the upper bound of $\mu_{\max}(E_{a,b})$ and shows the stability in our theorem. Finally, we provide a method to prove the stability of $E_{17,2}$, which is not covered by our main theorem. 

\textbf{Acknowledgements.} I extend my deepest gratitude to my advisor, Izzet Coskun, for providing invaluable guidance and insightful suggestions throughout the course of my research on this topic. I would like to thank Yeqin Liu for his constructive remarks on early drafts of this paper. I would also thank Sixuan Lou for many useful discussions. 

\section{Preliminaries} \label{pre}
In this section, we collect necessary preliminaries for the later proof.  First, we recall the definition of stability of a sheaf. 
\begin{definition}
    Let $(X,\mathcal{O}_X(1))$ be an $n$-dimensional polarized projective variety. Let $\mathcal{F}$ be a torsion free coherent sheaf on $X$. The degree of $\mathcal{F}$ is $\text{deg}(\mathcal{F}):=c_1(\mathcal{F})\cdot \mathcal{O}_X(1)^{n-1}$. Let $r(\mathcal{F})$ be the rank of $\mathcal{F}$. The slope of $\mathcal{F}$ is $\mu(\mathcal{F}):=\frac{\text{deg}(\mathcal{F})}{r(\mathcal{F})}$. We say $\mathcal{F}$ is (semi)stable if for every coherent subsheaf $\mathcal{W}$ of $\mathcal{F}$ with $0<r(\mathcal{W})<r(\mathcal{F})$, $\mu(\mathcal{W}) \leqpar \mu(\mathcal{F})$. The maximal slope $\mu_{\max}$ of a sheaf is defined to be the maximum over slopes of all subsheaves.
\end{definition}
\begin{definition}
    Let $X$ be a smooth projective algebraic variety over an algebraically closed field $K$. Let $L$ be a very ample line bundle on $X$. The syzygy bundle $M_L$ associated to $L$ is defined by the kernel of the evaluation map 
$$\phi _L: H^0(X,L) \otimes_K \mathcal{O}_X \longrightarrow L.$$
By this definition, we have a short exact sequence 
$$0\xrightarrow{}M_L\xrightarrow{}H^0(X,L) \otimes_K \mathcal{O}_X\xrightarrow[]{\phi_L} L \xrightarrow{}0.$$
\end{definition}

We need the following result on the maximal slope of syzygy bundles in \cite[Theorem 6.3]{Br08}.

\begin{theorem}
\label{Bre}
Let $f_i$, $i\in I={1,...,n}$, denote a set of primary monomials in  $k[x_0,...,x_n]$ of degree $d_i$. Then the maximal slope of $\emph{Syz}(f_i,i\in I)$ is 
\[\mu_\emph{max}(\emph{Syz}(f_i,i\in I))=\max_{J\subset I,|J|\geq 2}\{ \frac{d_J-\sum_{i\in J}d_i}{|J|-1}\}\]
where $d_J$ is the degree of the highest common factor of $f_i$, $i\in J$. 
\end{theorem}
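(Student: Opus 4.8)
The equality is proved as two opposite inequalities. Write $R=k[x_0,\dots,x_n]$, $X=\mathbb{P}^n$, and $f_i=x^{\beta_i}$ with $|\beta_i|=d_i$, so that $S:=\mathrm{Syz}(f_i,i\in I)$ sits in
$$0\to S\to\bigoplus_{i\in I}\mathcal{O}_X(-d_i)\xrightarrow{(f_i)}\mathcal{O}_X\to 0,$$
with $\mathrm{rk}\,S=|I|-1$ and $c_1(S)=-\sum_i d_i$ (we use, as is implicit in the hypothesis, that the $f_i$ have no common zero on $X$; everything below works verbatim for torsion-free $S$). The inequality ``$\ge$'' is the explicit half. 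For $J\subseteq I$ with $|J|\ge 2$ put $g_J=\gcd(f_i:i\in J)$, $\deg g_J=d_J$, and $f_i=g_Jh_i$, so $\gcd(h_i:i\in J)=1$. The restriction of $(f_i)$ to $\bigoplus_{i\in J}\mathcal{O}_X(-d_i)$ factors as $\bigoplus_{i\in J}\mathcal{O}_X(-d_i)\xrightarrow{(h_i)}\mathcal{O}_X(-d_J)\xrightarrow{\cdot g_J}\mathcal{O}_X$; the second map is injective and the cokernel of $(h_i)$ has codimension $\ge 2$, so $S_J:=\ker(h_i:i\in J)=\ker(f_i:i\in J)$ has $\mathrm{rk}\,S_J=|J|-1$ and $c_1(S_J)=d_J-\sum_{i\in J}d_i$. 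Extending relations by zero embeds $S_J\hookrightarrow S$, whence $\mu_{\max}(S)\ge\mu(S_J)=\big(d_J-\sum_{i\in J}d_i\big)/(|J|-1)$; ranging over $J$ (the case $J=I$ recovering $\mu(S)$) gives ``$\ge$''.

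\textbf{Reduction to an equivariant problem.} For ``$\le$'' it suffices to bound $\mu(\mathcal F)$ where $\mathcal F\subseteq S$ is the maximal destabilizing subsheaf, which is saturated with $\mu(\mathcal F)=\mu_{\max}(S)$. Since every $f_i$ is a monomial, the defining sequence is equivariant for the standard torus $T=(k^*)^{n+1}/k^*$ on $X$, once the $i$-th summand is endowed with the equivariant structure of $\mathcal{O}_X(-d_i)$ twisted by the weight $\beta_i$; at the level of multigraded modules the syzygy module is just the kernel of the $\mathbb{Z}^{n+1}$-graded map $\bigoplus_i R(-\beta_i)\xrightarrow{(f_i)}R$. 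As the Harder--Narasimhan filtration is canonical, it is $T$-equivariant, so $\mathcal F$ is a $T$-equivariant saturated subsheaf; replacing it by its reflexive hull (which changes nothing in codimension $\le 1$, hence not $\mu$) reduces us to a combinatorial estimate for equivariant reflexive subsheaves. The combinatorial input: the multidegree-$\alpha$ piece of the syzygy module is spanned by the ``monomial syzygies'' $\sum_{i\in J(\alpha)}c_i\,x^{\alpha-\beta_i}e_i$ with $\sum_ic_i=0$, where $J(\alpha)=\{i:\alpha\ge\beta_i\ \text{componentwise}\}$, so only the subsets $J(\alpha)$ and their gcd degrees $d_{J(\alpha)}$ ever appear.

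\textbf{The equivariant estimate.} I would finish via Klyachko's classification of $T$-equivariant reflexive sheaves on $\mathbb{P}^n$: such a sheaf is encoded by $n+1$ decreasing filtrations $\big(E^t(j)\big)_{j\in\mathbb Z}$, $t=0,\dots,n$, of a common vector space $E$ (the generic fiber, here of dimension $|I|-1$, with a natural Koszul basis), and both $\mathrm{rk}$ and $\deg$ are explicit in the jumps of these filtrations; a saturated subsheaf is precisely a subspace $E'\subseteq E$ with the induced filtrations $E'\cap E^t(\bullet)$. From the monomial-syzygy picture one computes the filtrations of $S$, writes $\mu(E')$ as a normalized sum of jump contributions over the $n+1$ coordinate directions, and maximizes over subspaces $E'$. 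An exchange argument should force a maximizing $E'$ to coincide with $(S_J)_\eta$ for some $J$, giving $\mu_{\max}(S)=\mu(\mathcal F)\le\max_J\mu(S_J)$ and matching the first part.

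\textbf{Where the difficulty lies.} The whole weight of the argument is in that last optimization over equivariant subsheaves. The obvious shortcut --- project $\mathcal F$ onto the summands $\mathcal{O}_X(-d_i)$ and induct on $|I|$ --- is genuinely too weak: already for $f_1=x^d,\ f_2=y^d$ on $\mathbb{P}^2$ the projection $S\to\mathcal{O}_X(-d)$ only witnesses slope $-d$, whereas in fact $S\cong\mathcal{O}_X(-2d)$, because membership in $S$ forces a syzygy supported on $J$ to ``pay'' the full $\sum_{i\in J}d_i-d_J$. Capturing this honestly is exactly what demands the finer multigraded / Klyachko bookkeeping in place of a crude d\'evissage, and coaxing the resulting optimization into a clean closed form --- rather than an unwieldy case analysis over the subsets $J(\alpha)$ --- is where I expect the real effort to go.
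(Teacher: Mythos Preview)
The paper does not prove this statement at all: Theorem~\ref{Bre} is quoted verbatim from Brenner \cite[Theorem~6.3]{Br08} as a black-box preliminary, and the paper's own contribution begins only afterward, using this result as input. So there is no ``paper's own proof'' to compare your proposal against.

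As a standalone sketch, your easy inequality ``$\geq$'' is correct and is exactly how one produces the explicit destabilizing subsheaves $S_J$. For ``$\leq$'' you have correctly identified the crucial structural point: the monomial hypothesis makes the syzygy sheaf torus-equivariant, hence the Harder--Narasimhan filtration is equivariant, and one is reduced to a combinatorial optimization over equivariant (multigraded) subsheaves. You are also honest that the actual optimization---showing a maximizing equivariant subsheaf must be one of the $S_J$---is where the work lies, and you have not carried it out. That is a genuine gap in your write-up, not a wrong turn: the Klyachko-filtration framework is a legitimate route, but the ``exchange argument should force\ldots'' sentence is exactly the step that needs to be made precise, and it is not automatic. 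Brenner's own argument in \cite{Br08} stays closer to the multigraded algebra (analyzing graded pieces of the syzygy module and the subsets $J(\alpha)$ you mention) rather than invoking Klyachko's classification in full, but the underlying combinatorics is the same; if you want to complete your version you would need to translate the filtration data for $S$ explicitly and then run the maximization, which is essentially reproving Brenner's theorem by a parallel method.
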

\

In our case, for syzygy bundle $E_{a,1}$ defined by 
\[0\xrightarrow{}E_{a,1}\xrightarrow{}\mathcal{O}_{\mathbb{P}^n}(-d)^a \xrightarrow[]{\phi}\mathcal{O}_{\mathbb{P}^n}\xrightarrow{}0,\]
if we denote $|J|=r$, we have 
\[\mu_\emph{max}(E_{a,1})=\max_{J\subset I,r\geq 2}\{ \frac{d_J-rd}{r-1}\}.\]

We also need the following result from \cite{Co10} and \cite{Coa11}.  

\begin{theorem}
    Let $P_n(d):=H^0(\mathbb{P}^n,\mathcal{O}_{\mathbb{P}^n}(d))=\frac{(d+1)\cdot ... \cdot (d+n)}{n!}$. Let $n\geq 3$, $d \geq 1$, and $n+1 \leq a \leq P_n(d)$ be integers. Then there is a stable syzygy bundle $E_{a,1}$. 
\end{theorem}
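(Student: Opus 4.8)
The plan is to induct on $a$, from $a=n+1$ up to $a=P_n(d)$; since we only need the existence of one stable $E_{a,1}$ for each admissible $(n,a,d)$, it suffices at each stage to produce a single suitable representative $\mathrm{Syz}(f_1,\dots,f_a)$ and to keep it base-point-free with $\langle f_1,\dots,f_a\rangle\subsetneq H^0(\mathcal O_{\mathbb P^n}(d))$ whenever $a<P_n(d)$.

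For the base case $a=n+1$, take $f_i=x_i^d$ for $i=0,\dots,n$. For every subset $J\subseteq\{0,\dots,n\}$ the monomials $\{x_i^d\}_{i\in J}$ have no common factor, so $d_J=0$ and Theorem \ref{Bre} gives
$$\mu_{\max}(E_{n+1,1})=\max_{2\le r\le n+1}\frac{-rd}{r-1}=\frac{-(n+1)d}{n}=\mu(E_{n+1,1}),$$
attained only by $J=\{0,\dots,n\}$. Hence $E_{n+1,1}$ is semistable, and since $\frac{-rd}{r-1}<\mu(E_{n+1,1})$ strictly for $r<n+1$, the maximal destabilizing subsheaf — which Brenner's analysis identifies with a syzygy subbundle — is all of $E_{n+1,1}$; thus $E_{n+1,1}=\mathrm{Syz}(x_0^d,\dots,x_n^d)$ is stable.

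For the inductive step, suppose $E_{a,1}=\mathrm{Syz}(f_1,\dots,f_a)$ is stable with $f_1,\dots,f_a$ base-point-free and $\langle f_1,\dots,f_a\rangle$ proper, $n+1\le a<P_n(d)$. Adjoining one more form $f_{a+1}$ and projecting $\mathcal O(-d)^{a+1}\to\mathcal O(-d)$ onto the last summand yields the exact sequence
$$0\to E_{a,1}\to E_{a+1,1}\to\mathcal O_{\mathbb P^n}(-d)\to 0.$$
Twisting the defining sequence of $E_{a,1}$ by $\mathcal O(d)$ and taking cohomology identifies $\mathrm{Ext}^1(\mathcal O(-d),E_{a,1})\cong H^0(\mathcal O(d))/\langle f_1,\dots,f_a\rangle\neq 0$, and the class of the displayed extension is the image of $f_{a+1}$; thus a general $f_{a+1}$ yields a general, and in particular nonzero, extension — one which is again base-point-free, with $\langle f_1,\dots,f_{a+1}\rangle$ still proper when $a+1<P_n(d)$, so the induction propagates and reaches $a=P_n(d)$. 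It then remains to show that a general extension of $\mathcal O_{\mathbb P^n}(-d)$ by a stable $E_{a,1}$ is stable. Since $\mu(E_{a,1})=-d-\tfrac d{a-1}<-d=\mu(\mathcal O(-d))$, a saturated destabilizing subsheaf $F\subsetneq E_{a+1,1}$ cannot be contained in $E_{a,1}$, so $F\to\mathcal O(-d)$ is nonzero with image $\mathcal I_Z(-d)$; one then argues that genericity of the extension class obstructs the lifting required for $F\cap E_{a,1}$ to be a proper nonzero subsheaf of $E_{a,1}$, forcing $F\cap E_{a,1}=E_{a,1}$ and hence $F=E_{a+1,1}$.

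The main obstacle is precisely this last point. Slope bookkeeping alone does not close it: for $F$ with $0<\mathrm{rk}(F\cap E_{a,1})<a-1$ the inequality coming from stability of $E_{a,1}$ together with $\deg\mathcal I_Z(-d)\le -d$ runs the wrong way, so one must instead show that the locus of classes $e\in\mathrm{Ext}^1(\mathcal O(-d),E_{a,1})$ admitting such an $F$ is a proper closed subset — via a dimension count over the finitely many relevant Hilbert polynomials of $F\cap E_{a,1}$ and of $Z$, compared against $\dim\mathrm{Ext}^1(\mathcal O(-d),E_{a,1})=P_n(d)-a$, which is tightest near the top of the range. The same genericity is what lets one pass from semistability to stability when $\gcd(d,a-1)>1$, where a proper subsheaf of slope exactly $\mu(E_{a+1,1})$ is not numerically excluded; it is this numerology that degenerates on $\mathbb P^2$, which is why the hypothesis $n\ge 3$ enters. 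An alternative route to semistability — inducting on $n$ by restricting to a general hyperplane and using $E_{a,1}|_{\mathbb P^{n-1}}=\mathrm{Syz}(\bar f_1,\dots,\bar f_a)$ — cannot by itself reach $a=P_n(d)$ since $P_{n-1}(d)<P_n(d)$, while a check through Hoppe's criterion would only trade the problem for the vanishing of $H^0$ of twists of $\wedge^q E_{a,1}$ for general forms, an equally delicate computation.
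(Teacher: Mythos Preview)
The paper does not prove this theorem: it is quoted in the preliminaries as a result from \cite{Co10} and \cite{Coa11}, with no argument given. In those references the proof proceeds by a completely different route from yours. Costa--Macias Marques--Mir\'o-Roig and Coand\u{a} work entirely through Brenner's monomial criterion (Theorem~\ref{Bre}): for each admissible $a$ they write down an explicit family of monomials of degree $d$ and verify, by a careful combinatorial case analysis of the quantities $d_J$, that the maximum in Brenner's formula is attained only at $J=I$. No extensions and no deformation arguments appear; the whole content is the construction of the monomial family and the estimate on $\gcd$-degrees.

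Your inductive extension strategy is conceptually appealing, but you yourself identify the obstruction, and it is a real one rather than a routine step you have merely omitted. Once $F\cap E_{a,1}$ has intermediate rank, stability of $E_{a,1}$ gives an inequality pointing the wrong way, so the only hope is the dimension count you describe: the locus of extension classes in $\mathrm{Ext}^1(\mathcal O(-d),E_{a,1})\cong H^0(\mathcal O(d))/\langle f_1,\dots,f_a\rangle$ that admit a destabilizing $F$ must be shown to be proper. But $\dim\mathrm{Ext}^1=P_n(d)-a$, which drops to $1$ at the penultimate step and to $0$ at $a=P_n(d)$, while the parameter space of potential destabilizers (choices of $F\cap E_{a,1}$ inside $E_{a,1}$ together with a subscheme $Z$) does not shrink correspondingly; a naive count does not close, and making it work would require nontrivial control over which subsheaves of $E_{a,1}$ can actually arise, i.e.\ essentially the information Brenner's criterion already packages. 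In short, the proposal is a sketch with a declared gap at its crux, and the literature proofs bypass that gap by staying inside the monomial world.
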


\section{Existence of Semistable Kernel Bundles} \label{exi}

We will use a special syzygy bundle $E_{k,1}$ constructed by the following. The idea is to find an $E_{k,1}$ with the largest possible maximal slope. \par
\begin{construction}
Let $d>0$ be an integer. Let $A$ be a real number. We will construct a syzygy bundle $E_{k,1}$ on $\mathbb{P}^n$ with $k=(n+1)^2-\sum_{i=2}^{n+1}( (n+1) \mod i)$. 
Define $d_r:=(r+A(r-1))d$. Then we have $\frac{d-d_r}{r-1}=-(A+1)d$ and $d_r-d_{r+s}=-s(1+A)d$ for positive integers $r,s$. \par 
For an integer $2 \leq i \leq n$, write $d-\lfloor d_i \rfloor = p_i(i-1)+q_i$ with $0\leq q_i<i-1$. Then we know $\lceil \frac{d-\lfloor d_i \rfloor}{i-1} \rceil =q_i+1$ and $\lfloor \frac{d-\lfloor d_i \rfloor}{i-1} \rfloor =q_i$. \par
Let $E_{k,1}$ to be the syzygy bundle defined by the following $k$ monomials 
\begin{align*}
  &  x_0^d, x_1^d,x_2^d,..., x_n^d, \\
&x_0^{\lfloor d_2 \rfloor}x_1^{d-\lfloor d_2 \rfloor}, 
x_0^{d-\lfloor d_2 \rfloor}x_1^{\lfloor d_2 \rfloor},
x_2^{\lfloor d_2 \rfloor}x_3^{d-\lfloor d_2 \rfloor}, 
x_2^{d-\lfloor d_2 \rfloor}x_3^{\lfloor d_2 \rfloor},...,\\
&...\\
&x_0^{\lfloor d_i \rfloor}x_1^{p_i+1} \cdot ... \cdot x_{q_i}^{p_i+1}x_{q_i+1}^{p_i}\cdot ... \cdot x_{i-1}^{p_i},
x_0^{p_i+1} x_1^{\lfloor d_i \rfloor} x_2^{p_i+1} \cdot ... \cdot x_{q_i}^{p_i+1}x_{q_i+1}^{p_i}\cdot ... \cdot x_{i-1}^{p_i}, ...
\\
&...,\\
&x_0^{\lfloor d_{n+1} \rfloor}x_1^{p_{n+1}} \cdot ... \cdot x_{q_{n+1}}^{p_{n+1}+1}x_{q_{n+1}+1}^{p_{n+1}}\cdot ... \cdot x_{n}^{p_{n+1}},
x_0^{p_{n+1}+1} x_1^{\lfloor d_{n+1} \rfloor} x_2^{p_{n+1}+1} \cdot ... \cdot x_{q_{n+1}}^{p_{n+1}+1}x_{q_{n+1}+1}^{p_{n+1}}\cdot ... \cdot x_{n}^{p_{n+1}}, ...
\end{align*}

\end{construction}

Let $\Delta_i:=\lceil \frac{d-\lfloor d_{i} \rfloor}{i-1} \rceil $ and $\Delta_{\max}:=\max \limits_{2 \leq i \leq k} \{\Delta_i\}$. We have 
\begin{align*}
    \Delta_{\max}& \leq \max \limits_{i} \{\frac{d- d_{i} +1}{i-1} +1\}\\
    &=\frac{i}{i-1}-(A+1) d\\
    & \leq  2-(A+1)d
\end{align*}

\begin{lemma}
If $d\geq n^3+4 n^2-n$, then the syzygy bundle $E_{k,1}$ in Construction 1 satisfies \[\mu_\text{max}(E_{k,1})\leq \frac{4(n+1)}{(n^2+5n+2)}-\frac{n^2+5n+4}{n^2+5n+2} d.\]
\end{lemma}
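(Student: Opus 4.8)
The plan is to feed the $k$ explicit monomials of Construction~1 into Brenner's formula (Theorem~\ref{Bre}) and to bound $\frac{d_J-rd}{r-1}$ uniformly over every index set $J$ with $r:=|J|\ge 2$, where $d_J$ is the degree of the gcd of the chosen monomials. The key structural observation is that every monomial in the construction has exactly one \emph{big} exponent --- namely $d$ for a pure power $x_j^d$, and $\lfloor d_i\rfloor$ for a group-$i$ monomial --- while all of its remaining exponents lie in $\{p_i,p_i+1\}$ and hence are at most $\Delta_{\max}\le 2-(A+1)d$. I would therefore split the analysis according to whether all monomials of $J$ share a common big variable, and I would fix $A$ to be the value for which the extremal configuration below attains slope exactly $-\frac{n^2+5n+4}{n^2+5n+2}d$, i.e.\ $A=-\frac{n^2+6n+4}{n^2+5n+2}$, so that $-\frac{(n+1)(2+A)}{n}=-\frac{n^2+5n+4}{n^2+5n+2}$.

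If no single variable is big in every monomial of $J$, then each variable contributes at most $\Delta_{\max}$ to the gcd, so $d_J\le(n+1)\Delta_{\max}$. This crude bound already settles every case except the one where $J$ is exactly the full block of group $n+1$; there one instead uses the sharp value $d_J=(n+1)p_{n+1}$ with $p_{n+1}\le\frac{d-\lfloor d_{n+1}\rfloor}{n}\le -(A+1)d+\frac1n$. Substituting these estimates into $\frac{d_J-rd}{r-1}$ and invoking the identities $\frac{d-d_r}{r-1}=-(A+1)d$ and $d_r-d_{r+s}=-s(1+A)d$ that are built into $d_r=(r+A(r-1))d$, the coefficient of $d$ in each resulting bound is at most $-\frac{(n+1)(2+A)}{n}$, with equality only for the full group-$(n+1)$ block; and in the extremal case the residual additive term is $\frac{n+1}{n^2}$, which is $\le\frac{4(n+1)}{n^2+5n+2}$ precisely for $n\ge 2$ --- this is exactly the additive slack in the stated bound.

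If all monomials of $J$ do share a big variable $x_{t^*}$, then the gcd-exponent on $x_{t^*}$ equals $\lfloor d_{i_{\max}}\rfloor$ (the $d_i$ are strictly decreasing in $i$, and pure powers only raise that exponent to $d$), while every other variable contributes at most $\Delta_{\max}$ and must lie in the support of the group-$i_{\min}$ monomial; hence $d_J\le\lfloor d_{i_{\max}}\rfloor+(i_{\min}-1)\Delta_{\max}$. Writing $\{j_1<\dots<j_s\}$ for the group indices occurring in $J$, so that $j_s-j_1\ge s-1$, the same identities collapse the coefficient of $d$ in $\frac{d_J-rd}{r-1}$ to $\frac{(j_s-j_1+1-s)+(j_s-j_1)A}{s-1}$, which is monotone in $j_s-j_1$ and so is maximized at $j_s-j_1=s-1$, where it equals $A$. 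Since $A<-\frac{n^2+5n+4}{n^2+5n+2}$, this entire family stays strictly below the target slope. In every non-extremal case the coefficient of $d$ falls short of the target by some $\frac{c}{\mathrm{poly}(n)}$, while all the additive corrections coming from the floor and ceiling functions are $O(\mathrm{poly}(n))$, so the hypothesis $d\ge n^3+4n^2-n$ comfortably absorbs them; taking the maximum over all $J$ then yields $\mu_{\max}(E_{k,1})\le\frac{4(n+1)}{n^2+5n+2}-\frac{n^2+5n+4}{n^2+5n+2}d$.

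The main obstacle is the concentrated case together with the large-$r$ refinement of the first case: one has to run a genuinely exhaustive analysis of $d_J$ for index sets that straddle several blocks or several groups, control the accumulated rounding errors, and confirm that the full group-$(n+1)$ block is the unique configuration reaching the target slope while every competitor retains a $\mathrm{poly}(n)$-sized margin. This bookkeeping --- not any single slick step --- is where the threshold $d\ge n^3+4n^2-n$ is consumed, and it is the part of the argument that needs the most care.
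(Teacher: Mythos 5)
Your choice of the construction parameter $A=-\frac{n^2+6n+4}{n^2+5n+2}$ breaks the lemma, and the step that hides this is the claim that the crude bound $d_J\le (n+1)\Delta_{\max}$ settles every ``no common big variable'' case except the full group-$(n+1)$ block. First, for a fixed upper bound on $d_J$ the required inequality gets harder, not easier, as $r$ grows: $\frac{d_J-rd}{r-1}=\frac{d_J-d}{r-1}-d$ increases toward $-d$ as $r\to\infty$, and $-d$ already lies above the target $-\frac{n^2+5n+4}{n^2+5n+2}d+\frac{4(n+1)}{n^2+5n+2}$, so an $r$-independent bound on $d_J$ cannot dispose of the large-$r$ configurations. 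Second, and decisively, with your $A$ the small exponents are $p_i\approx -(1+A)d=\frac{(n+2)d}{n^2+5n+2}$ for every group $i$, and the set $J$ consisting of the full group-$n$ block together with the full group-$(n+1)$ block ($r=2n+1$, common support $x_0,\dots,x_{n-1}$, no common big variable) has $d_J\approx n\cdot\frac{(n+2)d}{n^2+5n+2}$, giving $\frac{d_J-rd}{r-1}\approx -\frac{2n^3+10n^2+7n+2}{2n(n^2+5n+2)}\,d$, whereas the target coefficient is $-\frac{2n^3+10n^2+8n}{2n(n^2+5n+2)}$; the discrepancy $\frac{(n-2)d}{2n(n^2+5n+2)}$ grows linearly in $d$ for every $n\ge 3$ and cannot be absorbed by the additive constant. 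So with your parameter the constructed $E_{k,1}$ genuinely fails the stated bound; no refinement of the gcd bookkeeping can repair this, because the violating configuration's actual $d_J$, not just an overestimate of it, is too large.

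The paper avoids this by keeping $A$ free rather than fixing it in advance: it reduces everything to $d_J\le d_r$ with $d_r=(r+A(r-1))d$ decreasing in $r$, so in the ``two monomials in the same row'' case it suffices to check the single worst inequality $(n+1)\Delta_{\max}\le d_R$ at $r=R=\frac{(n+1)(n+2)}{2}$ (only $R$ monomials contain any given variable, so $d_J=0$ beyond that). That inequality --- driven by large sets of monomials sharing one variable with small exponents, not by the group-$(n+1)$ block --- is what forces $A\ge \frac{4(n+1)}{(n^2+5n+2)d}-\frac{n^2+5n+4}{n^2+5n+2}$ and hence produces the leading coefficient in the lemma; the distinct-row case gives the compatible upper constraint $A\le -1-\frac{2n-2}{d}$, and $d\ge n^3+4n^2-n$ makes the interval nonempty. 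A corrected version of your argument must therefore take $A$ at (essentially) the paper's lower endpoint, so that $-(1+A)d\approx\frac{2d}{n^2+5n+2}$, and must treat the sets straddling several blocks with $r$ up to $R$ as the binding case rather than an afterthought.
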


\begin{proof}
    By \ref{Bre}, we need to show
    \begin{equation} \label{eq:1}
        \frac{d_J-rd}{r-1}\leq Ad
    \end{equation}
    for all possible choice of $J\subset I$ and for all $r\geq 2$ . It suffices to prove $d_J \leq  d_r $.  \par
    Since there are $R:=1+2+...+(n+1)=\frac{(n+1)(n+2)}{2}$ monomials containing $x_0$ in the construction, we have $d_J=0$ for $r \geq R +1$. Therefore, we only need to prove $d_J \leq  d_r $ for $2\leq r \leq R$. \par
    Given $2\leq r \leq R$, suppose we choose $J=\{f_1,...,f_r\}$ where $f_i$ is in $a_i$-th row of our construction and $a_1\leq ... \leq a_r$. To make inequality (\ref{eq:1}) true, we need $d_J \leq d_r$ for all $2 \leq r \leq R$.\par
    If $a_i=a_j$ for some $i$, $j$, then 
    \begin{align*}
        d_J -d_r& \leq d_J-d_R \\
        &\leq a_1 \Delta_{\max}-d_R \\
        &\leq (n+1)\Delta_{\max}-d_R \\
        &\leq (n+1)(2-(A+1)d)-d \left(A \left(\frac{1}{2} (n+1) (n+2)-1\right)+\frac{1}{2} (n+1) (n+2)\right).
    \end{align*}
    Thus we have $A \geq \frac{4(n+1)}{(n^2+5n+2)d}-\frac{n^2+5n+4}{n^2+5n+2}$. \par 
    If $a_1<...<a_r$, then
    \begin{align*}
        d_J -d_r&\leq \lfloor d_{a_r} \rfloor + (a_1-1)\Delta_{\max}-d_r \\
        &\leq d_{a_r} -(a_1-1)(A+1)d) + 2(a_1-1)-dr \\
        &= d_{a_r-a_1+1}-d_r + 2(a_1-1) \\
        &=2(a_1-1)+(a_r-a_1-r+1)(1+A)d \\
        & \leq 2((n+1-r+1)-1)+(1+A)d \\
        & \leq 2(n-1)+(1+A)d
    \end{align*}
    Thus $A\leq \frac{-d-2 n+2}{d}$.\par
    In conclusion, we need $\frac{4(n+1)}{(n^2+5n+2)d}-\frac{n^2+5n+4}{n^2+5n+2} \leq A \leq \frac{-d-2 n+2}{d}$. This is true when $d\geq n^3+4 n^2-n$. \par
    Let $W \subset E_{k,1}$ be a subbundle of rank $s$. By \ref{Bre}, $\mu(W) \leq \max_{|J|=s+1}\{ \frac{d_J-(s+1)d}{s}\}$. 
\end{proof}

We will use this $E_{k,1}$ to find a upper bound of a general kernel bundle $E_{a,b}$. To do this, we need the following  proposition.

\begin{proposition}
\label{Ext}
Let $E_{a_1,b_1}$ and  $E_{a_2,b_2}$ be kernel bundles on $\mathbb{P}^n_K$. Let $a>b$ be positive integers with $a_1+a_2=a$, $b_1+b_2=b$. There exists a kernel bundles $E_{a,b}$ such that there is a non-split extension 
\[0\xrightarrow{}E_{a_1,b_1}\xrightarrow{}E_{a,b}\xrightarrow[]{}E_{a_2,b_2}\xrightarrow{}0.\]
\end{proposition}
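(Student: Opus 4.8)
The plan is to realize the extension via a block upper-triangular presentation and then compute its class. Fix surjections $\phi_i\colon\mathcal{O}_{\mathbb{P}^n}(-d)^{a_i}\to\mathcal{O}_{\mathbb{P}^n}^{b_i}$ with $\ker\phi_i=E_{a_i,b_i}$ for $i=1,2$, and for a homomorphism $\psi\colon\mathcal{O}_{\mathbb{P}^n}(-d)^{a_2}\to\mathcal{O}_{\mathbb{P}^n}^{b_1}$ (that is, a $b_1\times a_2$ matrix of forms of degree $d$) set
\[
\phi_\psi=\begin{pmatrix}\phi_1 & \psi\\ 0 & \phi_2\end{pmatrix}\colon\mathcal{O}_{\mathbb{P}^n}(-d)^{a}\longrightarrow\mathcal{O}_{\mathbb{P}^n}^{b}.
\]
First I would check on stalks that $\phi_\psi$ is surjective: given a local section $(s,t)$, lift $t$ through $\phi_2$ to some $v$, then lift $s-\psi v$ through $\phi_1$ to some $u$, so that $\phi_\psi(u,v)=(s,t)$. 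Hence $E^\psi:=\ker\phi_\psi$ is locally free of rank $a-b$, so it is a kernel bundle $E_{a,b}$ in the sense of this paper. Since the inclusion $\mathcal{O}(-d)^{a_1}\hookrightarrow\mathcal{O}(-d)^{a}$ and the projection $\mathcal{O}(-d)^{a}\to\mathcal{O}(-d)^{a_2}$ intertwine $\phi_1,\phi_\psi,\phi_2$ with the evident split sequences on source and target, the snake lemma yields an exact sequence $0\to E_{a_1,b_1}\to E^\psi\to E_{a_2,b_2}\to\operatorname{coker}\phi_1=0$, i.e.\ the desired short exact sequence of kernel bundles.

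Next I would pin down the class of this sequence in $\operatorname{Ext}^1(E_{a_2,b_2},E_{a_1,b_1})$. Unwinding the definition,
\[
E^\psi=\{(u,v):\phi_1u=-\psi v,\ \phi_2v=0\}=\mathcal{O}(-d)^{a_1}\times_{\mathcal{O}^{b_1}}E_{a_2,b_2},
\]
the fibre product formed from $\phi_1$ and from the composite $-\psi|_{E_{a_2,b_2}}\colon E_{a_2,b_2}\hookrightarrow\mathcal{O}(-d)^{a_2}\xrightarrow{\psi}\mathcal{O}^{b_1}$. Thus the sequence above is the pullback, along $-\psi|_{E_{a_2,b_2}}$, of the defining sequence $0\to E_{a_1,b_1}\to\mathcal{O}(-d)^{a_1}\xrightarrow{\phi_1}\mathcal{O}^{b_1}\to0$ of $E_{a_1,b_1}$, so its class equals $-\delta\bigl(\psi|_{E_{a_2,b_2}}\bigr)$, where $\delta\colon\operatorname{Hom}(E_{a_2,b_2},\mathcal{O}^{b_1})\to\operatorname{Ext}^1(E_{a_2,b_2},E_{a_1,b_1})$ is the connecting map of $\operatorname{Hom}(E_{a_2,b_2},-)$ applied to that defining sequence. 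The restriction map $\operatorname{Hom}(\mathcal{O}(-d)^{a_2},\mathcal{O}^{b_1})\to\operatorname{Hom}(E_{a_2,b_2},\mathcal{O}^{b_1})$ is surjective, since its cokernel injects into $\operatorname{Ext}^1(\mathcal{O}^{b_2},\mathcal{O}^{b_1})=H^1(\mathcal{O}_{\mathbb{P}^n})^{\,b_1b_2}=0$ (for $n\ge2$). Hence some $E^\psi$ is a non-split extension of $E_{a_2,b_2}$ by $E_{a_1,b_1}$ provided $\delta\neq0$, i.e.\ provided post-composition with $\phi_1$ fails to make $\operatorname{Hom}(E_{a_2,b_2},\mathcal{O}(-d)^{a_1})\to\operatorname{Hom}(E_{a_2,b_2},\mathcal{O}^{b_1})$ surjective.

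Finally I would compare dimensions. The defining sequence of $E_{a_2,b_2}$ gives $\operatorname{Hom}(E_{a_2,b_2},\mathcal{O}(-d))\cong\operatorname{Hom}(\mathcal{O}(-d)^{a_2},\mathcal{O}(-d))\cong K^{a_2}$, because $H^0(\mathcal{O}(-d))=H^1(\mathcal{O}(-d))=0$; hence $\dim\operatorname{Hom}(E_{a_2,b_2},\mathcal{O}(-d)^{a_1})=a_1a_2$, independent of $d$. The same sequence gives $\dim\operatorname{Hom}(E_{a_2,b_2},\mathcal{O}^{b_1})=b_1\bigl(a_2P_n(d)-b_2\bigr)$, which grows without bound in $d$. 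For $d$ large — the regime of the theorems proved here — the target has strictly larger dimension than the source, so post-composition with $\phi_1$ is not surjective, $\delta\neq0$, and any $\psi$ with $\delta(\psi|_{E_{a_2,b_2}})\neq0$ yields the required non-split kernel bundle $E_{a,b}$. I expect the main obstacle to be the middle step: matching the extension class with $-\delta(\psi|_{E_{a_2,b_2}})$ through the fibre-product description, and checking that $\delta$ itself is nonzero (not merely that $\operatorname{Ext}^1(E_{a_2,b_2},E_{a_1,b_1})\neq0$) — the surjectivity of the restriction map together with the two vanishings above are exactly what makes that go through.
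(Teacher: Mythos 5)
Your construction is the same as the paper's: both realize $E_{a,b}$ as the kernel of the block upper-triangular map $\bigl(\begin{smallmatrix}\phi_1 & \psi\\ 0 & \phi_2\end{smallmatrix}\bigr)\colon\mathcal{O}_{\mathbb{P}^n}(-d)^a\to\mathcal{O}_{\mathbb{P}^n}^b$, whose kernel sits in the desired sequence with sub $E_{a_1,b_1}$ and quotient $E_{a_2,b_2}$. The difference is what comes after. The paper chooses a ``non-degenerate'' matrix $N$ (with its dimensions transposed, $a_1\times b_2$ rather than $b_1\times a_2$ --- a typo) and simply asserts the resulting extension is non-split; you actually prove it: you check surjectivity of the block map on stalks, identify the class of the extension as $-\delta(\psi|_{E_{a_2,b_2}})$ for the connecting map $\delta$ of $\operatorname{Hom}(E_{a_2,b_2},-)$ applied to the presentation of $E_{a_1,b_1}$, note that every element of $\operatorname{Hom}(E_{a_2,b_2},\mathcal{O}^{b_1})$ extends to a matrix $\psi$ (since $\operatorname{Ext}^1(\mathcal{O}^{b_2},\mathcal{O}^{b_1})=H^1(\mathcal{O}_{\mathbb{P}^n})^{b_1b_2}=0$), and then force $\delta\neq0$ by the dimension count $a_1a_2<b_1\bigl(a_2P_n(d)-b_2\bigr)$. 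This supplies exactly the content the paper's proof omits, and it also exposes a genuine subtlety: non-splitness is not automatic for an arbitrary $\psi$, and indeed $\operatorname{Ext}^1(E_{a_2,b_2},E_{a_1,b_1})$ can vanish for small $d$ (for instance $E_{3,1}\cong\Omega_{\mathbb{P}^2}$ at $d=1$ is exceptional, so an extension of it by itself must split), so your restriction to the large-$d$ regime (or to the stated dimension inequality) is actually needed, whereas the proposition as written carries no hypothesis on $d$. Since the paper only invokes the proposition for $d\gg0$, your argument suffices for its intended use, and is the more careful one.
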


\begin{proof}
Suppose $E_{a_1,b_1}$ and  $E_{a_2,b_2}$ are defined by short exact sequences \[0\xrightarrow{}E_{a_1,b_1}\xrightarrow{}\mathcal{O}_{\mathbb{P}^n}(-d)^{a_1}\xrightarrow[]{\phi_1}\mathcal{O}_{\mathbb{P}^n}^{b_1}\xrightarrow{}0,\]
and   \[0\xrightarrow{}E_{a_2,b_2}\xrightarrow{}\mathcal{O}_{\mathbb{P}^n}(-d)^{a_1}\xrightarrow[]{\phi_2}\mathcal{O}_{\mathbb{P}^n}^{b_1}\xrightarrow{}0,\]
where $\phi_1$, $\phi_2$ are represented by matrices $M_1$, $M_2$ of polynomials of degree $d$. \par 
Let $N$ be a non-degenerate $a_1\times b_2$ matrix of polynomials of degree $d$. Let $M:=\begin{bmatrix}
M_1 & N\\
0 & M_2
\end{bmatrix}$. Then $M$ defines a surjective map $\phi:\mathcal{O}_{\mathbb{P}^n}(-d)^{a} \longrightarrow \mathcal{O}_{\mathbb{P}^n}^{b}$, which gives a kernel bundle $E_{a,b}$. By this construction, we have a non-split short exact sequence \[0\xrightarrow{}E_{a_1,b_1}\xrightarrow{}E_{a,b}\xrightarrow[]{}E_{a_2,b_2}\xrightarrow{}0.\]
\end{proof}

In the following theorem, we use the syzygy bundle $E_{k,1}$ in Construction 1 to find an upper bound of $\mu_{\max}(E_{a,b})$.  

\begin{theorem}
\label{j=1}
For $d\gg0$, a general kernel bundle $E_{mb-1,b}$ on $\mathbb{P}^n$ is semistable. Furthermore, we have the bound $\mu_{max}(E_{mb-1,b})\leq \frac{d (-n-1) (n+4)-4 (-n-1)}{n^2+5 n+2}$ for $d>\frac{6 b n^3+10 b n^2+10 b n+6 b-8 n-8}{b n^2-8 b n-b-4}$. For simplicity, we note $B=\frac{d (-n-1) (n+4)-4 (-n-1)}{n^2+5 n+2}$ .
\end{theorem}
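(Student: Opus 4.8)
The plan is to prove this by induction on $b$, using the short exact sequence of kernel bundles mentioned in the introduction,
\[
0\xrightarrow{}E_{k(b-1)-1,b-1}\xrightarrow{}E_{kb-1,b}\xrightarrow[]{\psi}E_{k,1}\xrightarrow{}0,
\]
which exists by Proposition~\ref{Ext} (taking $(a_1,b_1)=(k(b-1)-1,b-1)$ and $(a_2,b_2)=(k,1)$, so that $a_1+a_2=kb-1$ and $b_1+b_2=b$). Wait — I should be careful: the statement is for general $m$ with $2\le m\le k$, not just $m=k$. But since $E_{k,1}$ is the syzygy bundle with the largest possible maximal slope among the relevant range, and a general $E_{mb-1,b}$ for $m<k$ can be obtained by a specialization/degeneration argument (or by using Proposition~\ref{Ext} with $E_{k,1}$ replaced by $E_{m,1}$, whose maximal slope is no larger), it suffices to establish the bound for $m=k$ and then note the slope of $E_{mb-1,b}$ is controlled in the same way; I would actually run the induction with the sequence $0\to E_{m(b-1)-1,b-1}\to E_{mb-1,b}\to E_{m,1}\to 0$, using that $\mu_{\max}(E_{m,1})\le\mu_{\max}(E_{k,1})\le B$ from Lemma~0.5 (Lemma in Construction~1) since $m\le k$ and the construction's bound is increasing in the number of generators. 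Hmm, I should double-check that monotonicity — if in doubt, I restrict to $m=k$, prove the bound $\mu_{\max}(E_{kb-1,b})\le B$, and handle $m<k$ separately by a subsheaf-comparison argument.

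The key steps are as follows. First, the base case $b=1$ is exactly Lemma~0.5: $E_{k,1}$ from Construction~1 satisfies $\mu_{\max}(E_{k,1})\le \frac{4(n+1)}{n^2+5n+2}-\frac{n^2+5n+4}{n^2+5n+2}d = B$ once $d\ge n^3+4n^2-n$. Second, for the inductive step, let $W\subset E_{kb-1,b}$ be a subbundle and let $W'=W\cap E_{k(b-1)-1,b-1}$ (the saturated intersection) and $W''=\psi(W)\subset E_{k,1}$, giving $0\to W'\to W\to W''\to 0$. Then $\deg W=\deg W'+\deg W''$ and $r(W)=r(W')+r(W'')$, so $\mu(W)$ is a weighted average of $\mu(W')$ and $\mu(W'')$; since $\mu(W')\le\mu_{\max}(E_{k(b-1)-1,b-1})\le B$ by induction and $\mu(W'')\le\mu_{\max}(E_{k,1})\le B$ by the base case, we get $\mu(W)\le B$, hence $\mu_{\max}(E_{kb-1,b})\le B$. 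Third, to conclude semistability I compute the slope $\mu(E_{kb-1,b})$ directly from the defining sequence: $c_1(E_{kb-1,b})=-(kb-1)d$ and $r(E_{kb-1,b})=(kb-1)-b$, so $\mu(E_{kb-1,b})=\frac{-(kb-1)d}{kb-1-b}$, and I verify the inequality $B\le\mu(E_{kb-1,b})$, which reduces to an explicit polynomial inequality in $d,n,b$ that holds for $d>\frac{6bn^3+10bn^2+10bn+6b-8n-8}{bn^2-8bn-b-4}$ — this is where the stated degree bound comes from. One technical point to address: in the weighted-average step I need $\mu_{\max}$ to behave well, i.e. I should take $W'$ to be the saturation of $W\cap E_{k(b-1)-1,b-1}$ inside $E_{k(b-1)-1,b-1}$ so that it is a genuine subsheaf of the smaller kernel bundle and the induction hypothesis applies; saturating only increases the degree, so I must instead saturate $W$ in $E_{kb-1,b}$ first, then the quotient $W/W'$ injects into $E_{k,1}$ but perhaps not saturated — standard, since saturating $W''$ in $E_{k,1}$ only increases $\mu(W'')$, still $\le B$.

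The main obstacle I expect is twofold. The genuinely delicate point is not the induction — which is a routine see-saw argument on slopes — but rather (a) verifying that the chain of inequalities in the inductive step is preserved, namely that the \emph{same} constant $B$ (independent of $b$!) works at every stage; this is why it is essential that Lemma~0.5's bound $B$ does not depend on the number of generators beyond being an upper bound, and one must check that $\mu(E_{kb-1,b})\ge B$ for \emph{all} $b\ge 1$ simultaneously, i.e. that $\frac{-(kb-1)d}{kb-1-b}$ is bounded below uniformly — as $b\to\infty$ this tends to $\frac{-kd}{k-1}$, so one needs $B\le\frac{-kd}{k-1}$ together with the finitely-many small-$b$ checks, all packaged into the displayed degree bound; and (b) confirming that the extension in Proposition~\ref{Ext} can be chosen so that $E_{kb-1,b}$ is a \emph{general} kernel bundle (or at least that generality is an open condition and the constructed bundle specializes from a general one) — since semistability is an open condition in families, it suffices that one member of the family is semistable, so I would remark that the locus of semistable bundles is open and nonempty, hence contains the general member. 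The polynomial bookkeeping for step three is tedious but mechanical.
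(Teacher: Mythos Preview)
Your proposal is correct and follows essentially the same route as the paper: induction on $b$ using the extension $0\to E_{k(b-1)-1,b-1}\to E_{kb-1,b}\to E_{k,1}\to 0$ from Proposition~\ref{Ext}, with the base case supplied by the Lemma bounding $\mu_{\max}(E_{k,1})$, and semistability deduced from the inequality $B\le\mu(E_{kb-1,b})$. The only cosmetic differences are that the paper splits the inductive step into two cases according to whether $\psi(W)$ has full rank in $E_{k,1}$ (your weighted-average argument handles both at once and is arguably cleaner), and the paper treats $2\le m<k$ at the very end by dropping monomials from Construction~1 to obtain an $E_{m,1}$ with $\mu_{\max}(E_{m,1})\le B$, exactly as you anticipated.
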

\begin{proof}
 First, we prove the case when $m=k$. We prove every subsheaf $W \subsetneq E_{kb-1,b}$ satisfies $\mu(W) \leq B$  for $d\gg0$. This implies that $E_{kb-1,b}$ is stable because 
\begin{align*} 
\mu(E_{kb-1,b})-\mu(W) 
& =\frac{d (1-b k)}{b k-b-1}-\frac{d (-n-1) (n+4)-4 (-n-1)}{n^2+5 n+2} \\
&= \frac{-2 - 4 b + 2 b k - 5 b n - b n^2}{(-1 - b + b k) (2 + 5 n + n^2))} d +\frac{4 (-n-1)}{n^2+5 n+2}.
\end{align*} 
To prove this number is positive for $d\gg0$, it suffices to show $-2 - 4 b + 2 b k - 5 b n - b n^2>0$. \par 
Since $$-2 - 4 b + 2 b k - 5 b n - b n^2 > -2 - 4 b + 2 b k' - 5 b n - b n^2=-2 + \frac{1}{2} b (-1 - 8 n + n^2)$$
when $n>\sqrt{19}+4$, namely $n\geq 9$. For $n=3,4,5,6,7,8$, we can compute $k$ individually as $k=15, 21, 33, 41, 56, 69$. Correspondingly, we have $$\mu(E_{kb-1,b})-\mu(W)\geq -2 + 2 b, -2 + 2 b, -2 + 12 b, -2 + 12 b, -2 + 24 b, -2 + 30 b.$$ These are all positive numbers when $b\geq 2$. Therefore, we have $\mu_{max}(E_{mb-1,b})\leq B$ when $d>-\frac{4 (-1 - b + b k) (1 + n)}{-2 b k+b n^2+5 b n+4 b+2}$. \par
Now we prove $\mu(W)\leq B$ by induction on $b$. When $b=1$, by Lemma 1, $E_{k-1,1}$ satisfies $\mu_{max}(E_{k-1,1}) \leq \mu_{max}(E_{k,1})\leq B$. 
For a general $E_{kb-1,b}$, by Proposition \ref{Ext}, there is a kernel bundle $E_{k(b-1)-1,b-1}$ which fits in the following short exact sequence 
\[0\xrightarrow{}E_{k(b-1)-1,b-1}\xrightarrow{}E_{kb-1,b}\xrightarrow[]{\psi}E_{k,1}\xrightarrow{}0\]
where $E_{k,1}$ is the syzygy bundle constructed above. \par
Let $W \subsetneq E_{kb-1,b}$ and $W_1=\psi(W)$. \par
If $r(W_1)<r(E_{k,1})$, then $\mu(W)\leq \mu(W_1)\leq \mu_\text{max}(E_{k,1}) $. The theorem is proved in this case.  \par
If $r(W_1)=r(E_{k,1})=k-1$, then $r:=r(W)\geq k$. Let $W_2$ be the kernel of $W\xrightarrow{}W_1$. We have a short exact sequence 
\[0\xrightarrow{}W_2\xrightarrow{}W\xrightarrow[]{}W_1\xrightarrow{}0.\]
By induction hypothesis, $\mu(W_2) \leq \frac{4(n+1)}{(n^2+5n+2)}-\frac{n^2+5n+4}{n^2+5n+2} d$. 
Since $\text{deg}(W)=\text{deg}(W_2)+\text{deg}(W_1)$, we get 
\begin{align*} 
\mu(W) & =\frac{\mu(W_2)r(W_2)+\mu(W_1)r(W_1)}{r(W)}
= \frac{(r-k+1)\mu(W_2)-kd}{r}
\end{align*}
This number increases as $r$ increases when $\mu(W_2)<-\frac{k}{k-1}d$. Since $W_2$ is a proper subbundle of $E_{k(b-1)-1,b-1}$, by the induction hypothesis, 
\begin{align*}
    \mu(W_2) &\leq \frac{d (-n-1) (n+4)-4 (-n-1)}{n^2+5 n+2} 
     < \mu(E_{kb-1,1}) 
    = \frac{d (1-b k)}{b k-b-1}
    <-\frac{k}{k-1}d.
\end{align*}
Picking $r=k$, we get 
\[ \mu(W) \leq \frac{\mu(W_2)-kd}{k} \leq \mu(W_2) \leq B.\]
For $2 \leq m < k$, we drop the monomials in the construction of $E_{k,1}$ to make it a syzygy bundle $E_{m,1}$ with  $\mu (E_{m,1})\leq B$. Then the same argument implies $\mu_{\text{max}}(E_{mb-1,b})\leq B$ for $d\gg0$.
\end{proof}

\begin{theorem}
\label{main}
    For a given pair of positive integers $(a,b)$, if we can write $a=mb-j$ for some integers $j$, $m$ with $0\leq j \leq b-1$ and $2 \leq m \leq k$, then a general kernel bundle $E_{a,b}$ on $\mathbb{P}^n$ is semi-stable for $d\gg0$. 
\end{theorem}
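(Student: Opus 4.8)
The plan is to bootstrap from Theorem \ref{j=1} (which is exactly the case $j=1$) by means of Proposition \ref{Ext}, running an induction on $b$. Fix $a=mb-j$ with $0\le j\le b-1$ and $2\le m\le k$. When $j=0$, split $a=m+m(b-1)$ and apply Proposition \ref{Ext} to get a non-split extension $0\to E_{m,1}\to E_{mb,b}\to E_{m(b-1),b-1}\to 0$; here the sub and the quotient both have slope $-md/(m-1)=\mu(E_{mb,b})$, so once both are semistable --- the quotient by the inductive hypothesis and the base bundle $E_{m,1}$ as explained below --- the middle term $E_{mb,b}$ is semistable as well. When $j\ge 1$, split $a=(m-1)+\bigl(m(b-1)-(j-1)\bigr)$ to obtain $0\to E_{m-1,1}\to E_{a,b}\to E_{m(b-1)-(j-1),\,b-1}\to 0$, in which $E_{m-1,1}$ has the smaller $a/b$-ratio and therefore belongs on the sub side, while the quotient is again of the admissible form with $b$-parameter $b-1$, hence covered by induction. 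The base case is $b=1$ (so $j=0$), where $E_{m,1}$ is a syzygy bundle that is semistable for $d\gg0$: for $n+1\le m<k$ this is the cited theorem of Costa--Macias Marques--Mir\'{o}-Roig and Coand\u{a}, and for $m=k$ it is Construction 1 with Lemma 1 plus the elementary check $B\le -kd/(k-1)=\mu(E_{k,1})$ for $d\gg0$.

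For the inductive step with $j\ge1$ I would reuse, word for word, the subsheaf bookkeeping of the proof of Theorem \ref{j=1}: given $W\subsetneq E_{a,b}$, let $W_1$ be the image of $W$ in the quotient and $W_2=W\cap E_{m-1,1}$. If $r(W_1)$ is not maximal then $\mu(W)\le\mu(W_1)\le\mu_{\max}(\text{quotient})\le\mu(\text{quotient})$; if $r(W_1)$ is maximal then $\deg W_1\le\deg(\text{quotient})$, and writing $\mu(W)$ as the corresponding weighted average of $\mu(W_2)$ (bounded by the inductive hypothesis) and $\deg W_1$, one checks as in Theorem \ref{j=1} that it is maximised at the smallest admissible value of $r(W)$ and there is at most $\mu(E_{a,b})$. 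The point is to carry the $m$-dependent bound $\mu_{\max}(E_{m',1})=\mu(E_{m',1})=-m'd/(m'-1)$ (rather than the uniform constant $B$) through this bookkeeping, since that is what makes both weighted-average comparisons close with the inequality $\mu_{\max}(E_{a,b})\le\mu(E_{a,b})$, which is semistability.

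I expect the genuine difficulties to be this calibration and the boundary cases. Because $B$ is tuned to the extremal bundle $E_{k,1}$, the inequality $\mu(E_{a,b})\ge B$ is tight at $m=k$ and false for small $m$; keeping the sharper bound $-m'd/(m'-1)$ in play fixes this, but then at each peeling step one must re-verify that the bound on $\mu(W_2)$ supplied by the induction is negative enough for the weighted average to be decreasing in $r(W)$ --- the analogue of the condition $\mu(W_2)<-\tfrac{k}{k-1}d$ appearing in the proof of Theorem \ref{j=1}. Finally, one must handle the cases outside the reach of this construction: if $a-b=1$ the bundle has rank $1$ and is semistable trivially; if $m\le n$ (so that $E_{m,1}$, or $E_{m-1,1}$ when $j\ge1$, is not locally free) one peels off instead a kernel bundle of a different shape, e.g.\ one with $a-b=n$ whose semistability is governed by Bohnhorst--Spindler's criterion, so that the same bookkeeping still applies; and if $(m-1)b-j<n$ then $E_{a,b}$ is not a vector bundle and there is nothing to prove.
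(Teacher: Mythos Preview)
Your induction is set up so that it cannot close. In your extension
\[
0 \longrightarrow E_{m-1,1} \longrightarrow E_{a,b} \longrightarrow E_{m(b-1)-(j-1),\,b-1} \longrightarrow 0
\]
the quotient has strictly \emph{larger} slope than the middle term (as you yourself observe, $E_{m-1,1}$ has the smaller $a/b$-ratio). Your inductive hypothesis on $b$ only says that the quotient is semistable, i.e.\ $\mu_{\max}(\text{quotient})=\mu(\text{quotient})>\mu(E_{a,b})$. So in your Case~1, the chain ``$\mu(W)\le\mu(W_1)\le\mu_{\max}(\text{quotient})\le\mu(\text{quotient})$'' stalls: the first inequality $\mu(W)\le\mu(W_1)$ is simply false in general (it would need $\mu(W_2)\le\mu(W_1)$, which nothing forces), and the final bound $\mu(\text{quotient})$ is on the wrong side of $\mu(E_{a,b})$. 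Concretely, if $W\subset E_{a,b}$ meets the sub trivially and maps isomorphically onto a large subsheaf of the quotient, all you get is $\mu(W)\le\mu(\text{quotient})$, which does not rule out destabilisation. The same problem recurs in Case~2: your weighted-average expression is \emph{decreasing} in $r(W)$ (since $\mu(\text{quotient})>\mu(E_{m-1,1})$), so its maximum, taken at the smallest admissible $r(W)$, is again essentially $\mu(\text{quotient})$.

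The paper's proof is organised differently precisely to avoid this. It inducts on $j$, not on $b$, and --- crucially --- it does not carry the bare statement ``semistable'' through the induction but rather the quantitative bound $\mu_{\max}\le B$, where $B$ is the uniform constant from Theorem~\ref{j=1}. In the extension
\[
0 \longrightarrow E_{ms-1,s} \longrightarrow E_{m(sj+l)-j,\,sj+l} \longrightarrow E_{m(s(j-1)+l)-(j-1),\,s(j-1)+l} \longrightarrow 0
\]
(where $b=sj+l$), the sub is a $j=1$ bundle, so Theorem~\ref{j=1} gives $\mu_{\max}(\text{sub})\le B$ directly, while the quotient has parameter $j-1$, so the inductive hypothesis gives $\mu_{\max}(\text{quotient})\le B$ as well. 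Now \emph{both} pieces are bounded by the same constant $B$, hence so is $\mu(W)$ for every $W\subsetneq E_{a,b}$, and one finishes by checking $B\le\mu(E_{a,b})$ for $d\gg 0$. Your ``$m$-dependent bound'' $\mu(E_{m',1})=-m'd/(m'-1)$ cannot substitute for $B$ here, because it applies only to the syzygy piece and not to the inductive quotient; without a common upper bound below $\mu(E_{a,b})$, the bookkeeping of Theorem~\ref{j=1} does not transfer.
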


\begin{proof}
    We will show $\mu_{\text{max}}(E_{mb-j,b}) \leq B$ for $d\gg0$. We induct on $j$. \par 
    For $j=1$, this is true by Theorem 4. \par
    Write $b=sj+l$ for some $0 \leq l \leq j-1$. Then $E_{a,b}=E_{m(sj+l)-j,sj+l}$. If $l=0$, $E_{a,b}$ is semi-stable since it is a direct sum of stable bundles of the same slope.\par
    By Proposition \ref{Ext}, consider the short exact sequence of kernel bundles 
    $$0\xrightarrow{}E_{ms-1,s}\xrightarrow{}E_{m(sj+l)-j,sj+l}\xrightarrow[]{\psi}E_{m(s(j-1)+l)-(j-1),s(j-1)+l}\xrightarrow{}0. $$
    Let $W \subsetneq E_{a,b}$ and  $W_1=\psi(W)$. \par
    If $r(W_1)< r(E_{m(s(j-1)+l)-(j-1),s(j-1)+l})$, then $$\mu(W)\leq \mu(W_1) \leq \mu(E_{m(s(j-1)+l)-(j-1),s(j-1)+l}).$$ By induction hypothesis, $E_{a,b}$ is semi-stable. \par
    If $r(W_1) = r(E_{m(s(j-1)+l)-(j-1),s(j-1)+l})=m ((j-1) s+l)-(j-1) s-j-l+1$, let $W_2$ be the kernel of $W\xrightarrow{}W_1$. We have a short exact sequence 
\[0\xrightarrow{}W_2\xrightarrow{}W\xrightarrow[]{}W_1\xrightarrow{}0.\]
By the induction hypothesis, $\mu(W_2) \leq B$. Write $r$ for $r(W)$. 
    Since $\text{deg}(W)=\text{deg}(W_2)+\text{deg}(W_1)$, we get 
\begin{align*} 
\mu(W) & =\frac{\mu(W_2)r(W_2)+\mu(W_1)r(W_1)}{r(W)}\\
&= \frac{(r-(m ((j-1) s+l)-(j-1) s-j-l+1))\mu(W_2)-d (-m ((j-1) s+l)+j-1)}{r}\\
&\leq \frac{(r-(m ((j-1) s+l)-(j-1) s-j-l+1))B-d (-m ((j-1) s+l)+j-1)}{r}. 
\end{align*}
    This number increases as r increase. Setting $r=m ((j-1) s+l)-(j-1) s-j-l+2$, we have
    \begin{align*}
        \mu(W) &\leq \frac{B-d (-m ((j-1) s+l)+j-1)}{m ((j-1) s+l)-(j-1) s-j-l+2}\\
        &\leq \frac{d \left(m ((j-1) s+l)-j+\frac{(-n-1) (n+4)}{n^2+5 n+2}+1\right)}{j ((m-1) s-1)+l (m-1)-m s+s+2}\\
        &-\frac{4 (-n-1)}{\left(n^2+5 n+2\right) (j ((m-1) s-1)+l (m-1)-m s+s+2)}
    \end{align*}\\
    Thus 
    \begin{multline}
        \mu(E_{a,b})-\mu(W)= d \left(\frac{-m ((j-1) s+l)+j-\frac{(-n-1) (n+4)}{n^2+5 n+2}-1}{j ((m-1) s-1)+l (m-1)-m s+s+2}+\frac{j-m (l+\text{sj})}{-j+m (l+\text{sj})-l-\text{sj}}\right)+\\ \frac{4 (-n-1)}{\left(n^2+5 n+2\right) (j ((m-1) s-1)+l (m-1)-m s+s+2)}
    \end{multline}
    This number is positive when $d\gg0$. Therefore, $E_{a,b}$ is semi-stable when $(a,b)$ satisfies the condition in the theorem. 
\end{proof}

Theorem \ref{j=1} and Theorem \ref{main} do not cover all possible pairs of a and b. For example, on $\mathbb{P}^2$, these theorems do not 
 show the stability of the kernel bundle $E_{17,2}$. \par
The following proposition provide a new method prove the stability of kernel bundles. In \cite{Co10}, the authors provide a way to construct syzygy bundles with small maximal slopes in Chapter 3. Using their construction of $E_{8,1}$ and $E_{9,1}$, we can find a bound of $\mu_{max}(E_{17,2})$ and show this bundle is stable. 
\begin{proposition}
\label{Ex}
    On $\mathbb{P}^2$, a general kernel bundle $E_{17,2}$ is stable for $d\gg0$. 
\end{proposition}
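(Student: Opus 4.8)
The plan is to mimic the inductive argument of Theorem \ref{j=1}, but to replace the single syzygy bundle $E_{k,1}$ by a pair of syzygy bundles $E_{8,1}$ and $E_{9,1}$ on $\mathbb{P}^2$ with small maximal slope, and then assemble $E_{17,2}$ as an extension of one by the other. Concretely, I would first invoke the constructions of \cite[Ch.~3]{Co10} to produce syzygy bundles $E_{8,1}$ and $E_{9,1}$ on $\mathbb{P}^2$ (defined by suitably chosen monomials of degree $d$) together with explicit upper bounds for $\mu_{\max}(E_{8,1})$ and $\mu_{\max}(E_{9,1})$ coming from Brenner's formula (Theorem \ref{Bre}). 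The key numerical point is that $8+9=17$ and $1+1=2$, so by Proposition \ref{Ext} there is a non-split short exact sequence
\[
0\longrightarrow E_{9,1}\longrightarrow E_{17,2}\stackrel{\psi}{\longrightarrow} E_{8,1}\longrightarrow 0
\]
realizing a general $E_{17,2}$ (one checks $17>2$, so the hypothesis of Proposition \ref{Ext} applies).

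Next I would run the same case analysis on an arbitrary subsheaf $W\subsetneq E_{17,2}$ as in the proof of Theorem \ref{j=1}. Set $W_1=\psi(W)\subseteq E_{8,1}$ and let $W_2=\ker(W\to W_1)\subseteq E_{9,1}$, giving
\[
0\longrightarrow W_2\longrightarrow W\longrightarrow W_1\longrightarrow 0.
\]
In the case $r(W_1)<r(E_{8,1})=7$ we get $\mu(W)\leq\mu(W_1)\leq\mu_{\max}(E_{8,1})$, which must be checked to be at most $\mu(E_{17,2})$. In the case $r(W_1)=7$ we have $r:=r(W)\geq 8$ and
\[
\mu(W)=\frac{r(W_2)\,\mu(W_2)+7\,\mu(W_1)}{r}\leq\frac{(r-7)\mu_{\max}(E_{9,1})-7d}{r},
\]
using $\deg(W_1)=-7d$ when $W_1=E_{8,1}$. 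Provided the bound on $\mu_{\max}(E_{9,1})$ satisfies $\mu_{\max}(E_{9,1})<-\tfrac{7}{6}d$ for $d\gg 0$, the right-hand side is increasing in $r$, so its worst value is at $r=8$, namely $\mu(W)\leq\mu_{\max}(E_{9,1})-7d\leq\mu_{\max}(E_{9,1})$. Comparing $\mu_{\max}(E_{9,1})$ with $\mu(E_{17,2})=-\tfrac{17}{15}d$ then finishes the argument. One also has to dispose of the remaining possibility $r(W_1)=7$ but $W_1\subsetneq E_{8,1}$ (so $\deg W_1$ is not forced to equal $-7d$); here one bounds $\mu(W_1)\leq\mu_{\max}(E_{8,1})$ and argues exactly as before with $7d$ replaced by $7\,(-\mu_{\max}(E_{8,1}))^{-1}$-weighted terms, or more cleanly treats it together with the first case.

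The main obstacle is purely numerical: one must verify that the explicit maximal-slope bounds for $E_{8,1}$ and $E_{9,1}$ coming from the \cite{Co10} constructions, when fed through the extension, actually beat the threshold $\mu(E_{17,2})=-\tfrac{17}{15}d$ (and the auxiliary threshold $-\tfrac{7}{6}d$ that makes the slope function monotone in $r$). This is where the specific choice of monomials in the $E_{8,1}$ and $E_{9,1}$ constructions matters — a naive choice may not give a tight enough bound — so some care in selecting those monomials, and a short computation of the $d_J$ quantities in Brenner's formula for the relevant subsets $J$, will be needed. Once those inequalities are in place, stability (strict, since $17$ and $2$ are coprime, ruling out destabilizing subsheaves of equal slope) follows for $d\gg 0$ by the argument above.
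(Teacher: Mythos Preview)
Your strategy is the same as the paper's: realize $E_{17,2}$ as an extension of the two syzygy bundles $E_{8,1}$ and $E_{9,1}$ built from the explicit monomial families of \cite{Co10}, compute rank-by-rank maximal slopes via Theorem~\ref{Bre}, and run the subsheaf/quotient case analysis from Theorem~\ref{j=1}. The only structural difference is the order of the extension: the paper takes $E_{8,1}$ as the subbundle and $E_{9,1}$ as the quotient, whereas you take the reverse. Either order works in principle, so this is not a substantive divergence.

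There are, however, two concrete errors in your outline that would make the argument fail as written. First, the degree of $E_{8,1}$ is $-8d$, not $-7d$: the rank is $7$ but $c_1(E_{8,1})=-8d$ from the defining sequence $0\to E_{8,1}\to\mathcal{O}(-d)^8\to\mathcal{O}\to 0$. So in the full-rank case the correct expression is $\mu(W)\leq \frac{(r-7)\mu(W_2)-8d}{r}$. Second, your monotonicity hypothesis $\mu_{\max}(E_{9,1})<-\tfrac{7}{6}d$ is false: the paper's tables give $\mu_{\max}(E_{9,1})=-\tfrac{8}{7}d+O(1)$, and $-\tfrac{8}{7}>-\tfrac{7}{6}$. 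Consequently the single bound $\mu_{\max}(E_{9,1})$ is not sharp enough to push the estimate below $\mu(E_{17,2})=-\tfrac{17}{15}d$ uniformly in $r$ by a clean monotonicity trick. What the paper actually does (and what you would need to do) is use the \emph{rank-specific} bounds from the Brenner tables: for each possible rank of $W_2$ one reads off the corresponding maximal slope and checks the resulting inequality case by case, which does succeed. Your closing paragraph already anticipates that ``a short computation of the $d_J$ quantities'' is needed; that computation is not an afterthought but the heart of the proof.
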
 
\begin{proof}
    Let $e_0,e_1,e_2$ be the integers satisfying $e_0=\lceil \frac{d}{3} \rceil$, $e_0\geq e_1 \geq e_2$ and $e_0-e_2 \leq 1$. Let $E_{8,1}$ be the syzygy bundle defined by monomials $$x_0^d,x_1^d,x_2^d, x_0^{e_0}x_1^{e_1}x_2^{e_2},x_0^{e_2}x_2^{e_0+e_1},x_1^{e_0+e_1}x_2^{e_2},x_1^{e_0}x_2^{e_1+e_2}.$$ By \ref{Bre}, we know $\mu_{\max}(E_{8,1})=\max_{J\subset I,r\geq 2}\{ \frac{d_J-rd}{r-1}\}$. For each given $r$, we compute the largest possible slope $\mu_{\max}$ of subbundle of rank $r'=r-1$ in the following table. 
\begin{center}
\begin{tabular}{| c| c| c| c|c|c|}
\hline
 $r'$ & 1 & 2 &3 &$\geq 4$   \\ 
 \hline
 $\mu_{\max}$ &$-\frac{4}{3}d+O(1)$ &$-\frac{4}{3}d+O(1)$ &$-\frac{11}{9}d+O(1)$ &$-\frac{r'+1}{r'}d+O(1)$  \\  
 \hline
\end{tabular}
\end{center}
Thus, $\mu_{\text{max}}(E_{8,1})=-\frac{7}{6}d+O(1)$. It is achieved when $r'=7$. \par
Now we construct the syzygy bundle $E_{9,1}$. Let $d=3m+t$, $0 \leq t <3$, $i_l:=lm+\min(l,t)$,  $l=1,2$, and $E_{9,1}$ be the syzygy bundle given by monomials $$x_0^d,x_1^d,x_2^d,x_0^{i_1}x_1^{d-i_1},x_0^{i_2}x_1^{d-i_2},x_0^{d-i_1}x_2^{i_1},x_0^{d-i_2}x_2^{i_2},x_1^{i_1}x_2^{d-i_1},x_1^{i_2}x_2^{d-i_2}.$$
We compute $\mu_{\max}(E_{9,1})$ in the following table. 
\begin{center}
\begin{tabular}{|c| c| c| c|c|c|c|}
\hline
$r'$ & 1 & 2 &3 &4 &$\geq 5$   \\ 
 \hline
 $\mu_{\max}$ &$-\frac{4}{3}d+O(1)$ &$-\frac{7}{6}d+O(1)$ &$-\frac{11}{9}d+O(1)$ &$-\frac{7}{6}d+O(1)$ &$-\frac{r'+1}{r'}d+O(1)$  \\  
 \hline
\end{tabular}
\end{center}
Thus, $\mu_{\text{max}}(E_{9,1})=-\frac{8}{7}d+O(1)$. It is achieved when $r'=8$. \par
Consider the bundle $E_{17,2}$ constructed as the extension $E_{a,b}$ in \ref{Ext}. We get short exact sequence 
$$0\xrightarrow{}E_{8,1}\xrightarrow{}E_{17,2}\xrightarrow[]{}E_{9,1}\xrightarrow{}0.$$
Let $W \subsetneq E_{17,2}$ and $W_1=\psi(W)$.  \par
If $r(W_1)<r(E_{9,1})$, then $\mu(W)\leq \mu(W_1)\leq \mu_\text{max}(E_{9,1})=-\frac{8}{7}d < \mu(E_{17,2}) $. The proposition is proved. \par
If $r(W_1)=r(E_{9,1})=8$, then $r':=r(W)\geq 9$. Let $W_2$ be the kernel of $W\xrightarrow{}W_1$. We have a short exact sequence 
\[0\xrightarrow{}W_2\xrightarrow{}W\xrightarrow[]{}W_1\xrightarrow{}0.\]
Since $\text{deg}(W)=\text{deg}(W_2)+\text{deg}(W_1)$, we get 
\begin{align*} 
\mu(W) & =\frac{\mu(W_2)r(W_2)+\mu(W_1)r(W_1)}{r(W)}
= \frac{(r'-8)\mu(W_2)-9d}{r'}.
\end{align*}
Here $W_2$ is a subbundle of $E_{8,1}$. According to the maximal slope numbers we compute above, we conclude that $\mu(W) \leq -\frac{8}{7}d +O(1)$. Thus, $\mu(W) < \mu(E_{17,2})$. $E_{17,2}$ is stable. 
\end{proof}\par

Note that Proposition \ref{Ex} is not covered by our main theorem \ref{main}. We expect this method works for more bundles in the form of $E_{a,2}$. However, for bundles of the form $E_{a,b}$ with $b\geq 3$, the construction in \cite{Co10} used in this proposition is not effective. The main difficulty is the explicit construction of a syzygy bundle with smallest possible maximal slope.

\nocite{*}
\bibliographystyle{plain}
\bibliography{stability}

\end{document}